\title{Asymptotic expansions of zeros of a partial theta function}
\author{Vladimir Petrov Kostov\\ 
Universit\'e de Nice, 
Laboratoire de Math\'ematiques, Parc Valrose,\\ 06108 Nice Cedex 2, France,  
e-mail: vladimir.kostov@unice.fr} 
\date{}
\newtheorem{tm}{Theorem}
\newtheorem{rem}[tm]{Remark}
\newtheorem{rems}[tm]{Remarks}
\newtheorem{prop}[tm]{Proposition}
\begin{document} 
\maketitle 
\begin{abstract}
The bivariate series $\theta (q,x):=\sum _{j=0}^{\infty}q^{j(j+1)/2}x^j$ defines  
a {\em partial theta function}. For fixed $q$ ($|q|<1$), $\theta (q,.)$ is an 
entire function. We prove a property of stabilization of the 
coefficients of the Laurent series in $q$ of the zeros of 
$\theta$. The coefficients $r_k$ of the stabilized series 
are positive integers. 
They are the elements of a 
known increasing sequence satisfying the recurrence relation 
$r_k=\sum _{\nu =1}^{\infty}(-1)^{\nu -1}(2\nu +1)r_{k-\nu (\nu +1)/2}$. 

{\bf AMS classification:} 26A06\\ 

{\bf Keywords:} partial theta function; asymptotics
\end{abstract}

\section{Introduction}

The bivariate series $\theta (q,x):=\sum _{j=0}^{\infty}q^{j(j+1)/2}x^j$ 
(where $(q,x)\in \mathbb{C}^2$, $|q|<1$) defines  
a {\em partial theta function}. For fixed $q$, $\theta (q,.)$ is an 
entire function.

Different domains in which the partial theta function finds applications are 
asymptotic analysis (see \cite{BeKi}), the Ramanujan type $q$-series 
(see \cite{Wa}), the theory 
of (mock) modular forms (see \cite{BrFoRh}), statistical physics 
and combinatorics (see \cite{So}) and also some questions concerning hyperbolic 
polynomials (i.e. real polynomials with all roots real, see \cite{KaLoVi}, 
\cite{KoSh} and \cite{Ko2}). The latter are connected to a problem considered 
by Hardy, Petrovitch and Hutchinson, see \cite{Ha}, \cite{Hu}, \cite{Ost} 
and \cite{Pe}. Other properties of $\theta$ are considered in \cite{AnBe}.

In the article \cite{Ko4} the zeros of $\theta$ are presented in the form 
$-\xi _j=-1/q^j\Delta _j$. It is shown in \cite{Ko4} that $\Delta _j$ are 
{\em formal power series (FPS)} of the form $1+O(q)$. 
For $|q|\leq 0.108$ all zeros $-\xi _j$ 
are distinct (see \cite{Ko4}), so they depend analytically on $q$ and 
the series $\Delta _j$ converge. In the present paper we present the zeros 
of $\theta$ also as Laurent series of the form 
$-q^{-j}+a_jq^{\kappa _j}+o(q^{\kappa _j})$, where $\mathbb{Z}\ni \kappa _j>-j$.

\begin{tm}\label{tm1}
(1) The series $\Delta _j$ is of the form $1+(-1)^jq^{j(j+1)/2}\Phi _j(q)$, where 
$\Phi _j:=1+O(q)$
is an FPS in $q$. Hence $a_j=(-1)^j$ and 
$\kappa _j=j(j-1)/2$. 

(2) Represent the zeros $-\xi _j$ in the form 
$-\xi _j=-q^{-j}+(-1)^jq^{j(j-1)/2}(1+\sum _{k=1}^{\infty}g_{j,k}q^k)$. 
There exists an FPS of the form $(H)~:~1+\sum _{k=1}^{\infty}r_kq^k$, 
$r_k\in \mathbb{Z}$, such that $g_{j,k}=r_k$ for $k=1$, $\ldots$, $j$ and 
$j\geq 2$. 
The coefficients $r_k$ satisfy the following recurrency relation (we set 
$r_k:=0$ for $k<0$ and $r_0:=1$):

\begin{equation}\label{recur}
r_k=\sum _{\nu =1}^{\infty}(-1)^{\nu -1}(2\nu +1)r_{k-\nu (\nu +1)/2}~.
\end{equation} 
\end{tm}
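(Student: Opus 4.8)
The plan is to rewrite $\theta(q,-\xi_j)=0$ as an equation for a quantity close to $1$, solve it by a formal implicit–function argument, and then read off the stabilized coefficients. First I would set $-\xi_j=-uq^{-j}$ (so $u=1/\Delta_j$) and use the identity $m(m+1)/2-mj=(m-j)(m-j+1)/2-j(j-1)/2$ to turn $\theta(q,-uq^{-j})=0$, after dividing out the invertible factor $(-1)^ju^jq^{-j(j-1)/2}$ and separating the terms $m<j$, into
\begin{equation}\label{E}
G_j(q,u-1)\ :=\ \theta(q,-u)+\sum_{k=1}^{j}(-1)^kq^{k(k-1)/2}u^{-k}\ =\ 0 .
\end{equation}
With $u=1+w$, the preliminary computation is that of $G_j(q,0)$ and $B_j(q):=\partial_wG_j(q,0)$. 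Pairing the monomial of $\theta(q,-1)$ with exponent $n(n+1)/2$ against the $k=n+1$ term of the finite sum, everything cancels for $n\le j-1$, leaving $G_j(q,0)=\sum_{n\ge j}(-1)^nq^{n(n+1)/2}=(-1)^jq^{j(j+1)/2}(1-q^{j+1}+q^{2j+3}-\cdots)$; the same telescoping on the $w$-derivative gives $B_j(q)=1+\sum_{n=1}^{j-1}(-1)^n(2n+1)q^{n(n+1)/2}+\sum_{n\ge j}(-1)^nnq^{n(n+1)/2}$, so $B_j(0)=1$. Since $G_j(0,0)=0$ and $\partial_wG_j(0,0)=1$, (\ref{E}) has a unique solution $w_j\in q\mathbb{C}[[q]]$, and comparing $q$-orders in $w_jB_j(q)=-G_j(q,0)+O(w_j^2)$ forces $w_j=-(-1)^jq^{j(j+1)/2}(1+O(q))$. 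Then $\Delta_j=1/(1+w_j)=1+(-1)^jq^{j(j+1)/2}\Phi_j$ with $\Phi_j=1+O(q)$, and expanding $-\xi_j=-q^{-j}/\Delta_j$ yields $a_j=(-1)^j$, $\kappa_j=j(j-1)/2$; this proves part~(1).

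For part~(2) put $N:=j(j+1)/2$ and write $G_j(q,w)=G_j(q,0)+wB_j(q)+w^2R_j(q,w)$, where $R_j$ is a formal power series in $w$ with coefficients in $\mathbb{C}[[q]]$. From $w_j=O(q^N)$ one gets $w_j^2R_j(q,w_j)=O(q^{2N})$, and the hypothesis $j\ge2$ enters exactly here: it gives $2N=j(j+1)>j(j+3)/2=N+j$, so the nonlinear part is irrelevant to the coefficients we need. Hence $w_jB_j(q)=-\sum_{n\ge j}(-1)^nq^{n(n+1)/2}+O(q^{2N})$, so $w_j$ and $-\sum_{n\ge j}(-1)^nq^{n(n+1)/2}/B_j(q)$ have the same coefficients through degree $2N-1$. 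Multiplying by $-(-1)^jq^{-N}$ and using that $\Phi_j$ and $-(-1)^jq^{-N}w_j$ agree through degree $N-1$ (from $w_j=(1-\Delta_j)/\Delta_j$), I get that $\Phi_j$ agrees through degree $N-1$ with $(1-q^{j+1}+q^{2j+3}-\cdots)/B_j(q)$. Now compare with the $j\to\infty$ data: the numerator agrees with $1$ through degree $j$, and $B_j$ agrees through degree $N-1$ with $B_\infty(q):=1+\sum_{n\ge1}(-1)^n(2n+1)q^{n(n+1)/2}$ since $B_\infty-B_j=\sum_{n\ge j}(-1)^n(n+1)q^{n(n+1)/2}$ starts at degree $N$. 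As $j\le N-1$ for $j\ge2$, $\Phi_j$ agrees with $(H):=1/B_\infty(q)$ through degree $j$; since $g_{j,k}$ is the coefficient of $q^k$ in $\Phi_j$ for $k<N$, this gives $g_{j,k}=r_k$ for $1\le k\le j$, where $(H)=1+\sum_{k\ge1}r_kq^k$. Finally $r_k\in\mathbb{Z}$ because $B_\infty\in\mathbb{Z}[[q]]$ has constant term $1$, and reading the identity $(H)\cdot B_\infty=1$ in degree $k\ge1$ is precisely~(\ref{recur}).

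The routine parts are the two telescoping identities for $G_j(q,0)$ and $B_j(q)$ and the formal power–series manipulations; the subtle part is the order bookkeeping in the middle step. I expect the main obstacle to be making rigorous that the quadratic-and-higher remainder is genuinely $O(q^{2N})$ rather than merely $O(q^{N+1})$ — this is exactly what makes the hypothesis $j\ge2$ necessary (for $j=1$ the method pins down $w_1$ only modulo $q^2=q^{2N}$, not enough even for $g_{1,1}$) — and that replacing $B_j$ by $B_\infty$ and the numerator by $1$ is legitimate to the precision claimed, which relies on the gaps between consecutive triangular numbers increasing. As a check one computes $j=2$: $g_{2,1}=r_1=3$ and $g_{2,2}=r_2=9$, consistent with~(\ref{recur}).
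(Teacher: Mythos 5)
Your proof is correct, and it takes a genuinely different route from the paper's. The paper works directly with the monomials $\Psi _s=q^{s(s+1)/2}x^s|_{x=-\xi _j}$: it pairs $\Psi _{j-1-l}$ with $\Psi _{j+l}$ (the symmetry $\lambda _{\nu}=\lambda _{2j-1-\nu}$, which is your telescoping), organizes the coefficients into a matrix, extracts a triangular linear system $(E_k)$ for the unknowns $g_{j,k}$, and observes that the equations $(E_j),\ldots ,(E_{j+j_0})$ are independent of $j$ once $j\geq j_0$ and coincide with (\ref{recur}). You instead substitute $x=-uq^{-j}$, use the triangular-number identity to collapse $\theta (q,-\xi _j)=0$ into a single equation $G_j(q,u-1)=0$ with $G_j(0,0)=0$ and $\partial _wG_j(0,0)=1$, solve it by the formal implicit function theorem, and obtain stabilization from the coefficientwise agreement of $B_j$ with $B_{\infty}=\sum _{n\geq 0}(-1)^n(2n+1)q^{n(n+1)/2}$ up to order $N=j(j+1)/2$ together with the estimate $2N>N+j$ for $j\geq 2$; your $(2l+1)$ coefficients of $B_{\infty}$ are exactly the paper's paired coefficients in its step (iv). Your route buys several things: it produces the closed form $(H)=1/B_{\infty}=1/(q)_{\infty}^3$ directly (the paper only obtains this by citing \cite{GuZe}), it turns (\ref{recur}) into the tautological degree-$k$ coefficient of $(H)\cdot B_{\infty}=1$, and it isolates transparently why $j=1$ fails ($2N=N+j$ there). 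The paper's route is more explicit: its triangular system also computes the non-stabilized coefficients of the expansion (e.g.\ the value $107$ instead of $108$ for $j=4$), which your congruence modulo $q^{j+1}$ does not see. The one point you should state explicitly is that the formal solution $w_j$ produced by the implicit function theorem coincides with the $w$ attached to the actual zero $-\xi _j$; this follows from uniqueness in $q\mathbb{C}[[q]]$ combined with the fact, quoted from \cite{Ko4} and used by the paper as well, that $\Delta _j=1+O(q)$.
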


\begin{rems}\label{remlist}
{\rm (1) The relation (\ref{recur}) implies that all numbers $r_k$ are integer. 
The first 20 of them are equal to} 

$$3~,~9~,~22~,~51~,~108~,~221~,~429~,~810~,~1479~,~2640~,~4599~,~7868~,~$$
$$13209~,~21843~,~35581~,~57222~,~90882~,~142769~,~221910~,~341649~.$$
{\rm The sequence $\{ r_k\}$ is known (see A000716 in Sloane's database of 
integer sequences \cite{Sl}). Set 
$(q)_{\infty}:=\prod _{k=1}^{\infty}(1-q^k)$. It is shown in \cite{GuZe} that  
$1+\sum _{k=1}^{\infty}r_kq^k=1/(q)_{\infty}^3$ and that, 
using Jacobi's triple product 
(see \cite{HW}, p. 377), one obtains the equality} 

\begin{equation}\label{qinfty3}
(q)_{\infty}^3=\sum _{j=0}^{\infty}(-1)^j(2j+1)q^{j(j+1)/2}~.
\end{equation}
{\rm With the help of this equality the authors of \cite{GuZe} obtain the 
recurrence formula (\ref{recur}).

(2) The functions $M(q):=(q)_{\infty}^3=1-3q+5q^3-\cdots$ and 
$(q)_{\infty}=1-q-q^2+\cdots$ are  
positive valued for 
$q\in (-1,1)$ and flat at $\pm 1$ 
(all this follows from $(q)_{\infty}:=\prod _{k=1}^{\infty}(1-q^k)$). 
They are decreasing on $[0,1)$ because every factor 
$1-q^k$ is positive valued and decreasing. Their derivatives at $0$ 
are negative, therefore they attain their maximal values (which are $>1$) 
at some point in $(-1,0)$ (the same for both of them). 
Their second derivatives at negative 
points close to $0$ are negative, so they have inflection points in $(-1,0)$. 
At $0$ the function $M$ has an inflection point. The function $(q)_{\infty}$ 
has an inflection point in $(0,1)$ (because it is flat at $1$ and 
$((q)_{\infty})''|_{q=0}<0$).} 

\end{rems}

The above remarks imply the following proposition:

\begin{prop}\label{prop1}
(1) The integer sequences $\{ r_k\}$ ($k\geq 0$), $\{ r_{k+1}-r_k\}$ 
($k\geq 0$) and  
$\{ r_{k+2}-2r_{k+1}+r_k\}$ ($k\geq 1$) are increasing. 

(2) The radius of convergence of 
the Taylor series $(H)$ of Theorem~\ref{tm1} 
equals $1$. 

\end{prop}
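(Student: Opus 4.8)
The plan is to deduce both parts from the identity $1+\sum_{k\ge1}r_kq^k=1/(q)_\infty^3$ recorded in the Remarks, together with the trivial splitting $(q)_\infty=(1-q)\,u(q)$, $u(q):=\prod_{k\ge2}(1-q^k)$; writing $f(q):=1/(q)_\infty^3$, this gives
\[
f(q)=\frac{u(q)^{-3}}{(1-q)^3}=1+\sum_{k\ge1}r_kq^k .
\]

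For part~(1) I would first note that the three listed sequences are exactly the first, second and third finite forward differences of $\{r_k\}$: ``$\{r_k\}$ increasing'' is $r_{k+1}-r_k>0$, ``$\{r_{k+1}-r_k\}$ increasing'' is $r_{k+2}-2r_{k+1}+r_k>0$, and ``$\{r_{k+2}-2r_{k+1}+r_k\}$ increasing'' is $r_{k+3}-3r_{k+2}+3r_{k+1}-r_k>0$. For every $n\ge1$ (resp. $n\ge2$, $n\ge3$) the coefficient of $q^n$ in $(1-q)f$ (resp. $(1-q)^2f$, $(1-q)^3f$) is exactly $r_n-r_{n-1}$ (resp. $r_n-2r_{n-1}+r_{n-2}$, $r_n-3r_{n-1}+3r_{n-2}-r_{n-3}$), so it suffices to show these coefficients are positive; and the splitting makes the three series telescope,
\[
(1-q)f=\frac{u^{-3}}{(1-q)^2},\qquad (1-q)^2f=\frac{u^{-3}}{1-q},\qquad (1-q)^3f=u^{-3}.
\]
Since $(1-q)^{-1}=\sum_{k\ge0}q^k$ and $(1-q)^{-2}=\sum_{k\ge0}(k+1)q^k$ have positive coefficients, each $(1-q^k)^{-3}$ has non-negative ones, and products of series with non-negative coefficients keep this property, one gets $u^{-3}=\prod_{k\ge2}(1-q^k)^{-3}=\sum_{m\ge0}b_mq^m$ with $b_m\ge0$ and $b_0=1$. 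Reading off the three displays: the coefficient of $q^n$ in $(1-q)^{-2}u^{-3}$ is $\ge(n+1)b_0=n+1$; that in $(1-q)^{-1}u^{-3}$ is $\ge b_0=1$; and $b_n$ is a sum of positive contributions indexed by the partitions of $n$ into parts $\ge2$, whence $b_n>0$ exactly for $n=0$ and $n\ge2$ (every $n\ge2$ is a sum of $2$'s and $3$'s). This yields $r_n-r_{n-1}>0$ for $n\ge1$, $r_n-2r_{n-1}+r_{n-2}>0$ for $n\ge2$, and $r_n-3r_{n-1}+3r_{n-2}-r_{n-3}=b_n>0$ for $n\ge4$, i.e.\ exactly the three assertions of~(1).

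For part~(2) I would estimate the radius of convergence $R$ of $(H)$ from both sides. On the open unit disc $\sum_{k\ge1}|q|^k<\infty$ and no factor $1-q^k$ vanishes, so $(q)_\infty$ is holomorphic and zero-free there; hence $f$ is holomorphic on $\{|q|<1\}$, its Taylor series at $0$ is $(H)$, and so $R\ge1$. If one had $R>1$, then $(H)$ would converge at $q=1$ and its sum, being continuous on $(-R,R)\ni1$, would tend to the finite limit $1+\sum_{k\ge1}r_k$ as $q\to1^-$; but for $q\in[0,1)$
\[
0<M(q)=(q)_\infty^3=(1-q)^3\prod_{k\ge2}(1-q^k)^3\le(1-q)^3 ,
\]
so $M(q)\to0^+$ and $f(q)=1/M(q)\to+\infty$ as $q\to1^-$ (the flatness of $M$ at $1$ noted in the Remarks), a contradiction; hence $R\le1$ and $R=1$.

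The step I expect to be the main obstacle is the bookkeeping in part~(1): aligning ``the $j$-th listed sequence is increasing'' with positivity of the $j$-th finite difference of $\{r_k\}$ and with positivity of the Taylor coefficients of $(1-q)^jf$, and checking that $(1-q)^j/(q)_\infty^3=(1-q)^{\,j-3}\prod_{k\ge2}(1-q^k)^{-3}$ leaves, for $j=1,2,3$, only factors with non-negative power-series coefficients. The remaining ingredients — the closed forms of the coefficients of $(1-q^k)^{-m}$ and the two one-sided estimates on $R$ — are routine.
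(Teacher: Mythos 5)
Your proposal is correct and follows essentially the same route as the paper: the splitting $1/(q)_\infty^3=(1-q)^{-3}\prod_{k\ge 2}(1-q^k)^{-3}$ is exactly the paper's factorization $R^3W$, the telescoping by multiplication with $(1-q)$ reproduces the paper's recursions $r_{k+1}=r_k+s_{k+1}$, $s_{k+1}=s_k+t_{k+1}$, $t_{k+1}=t_k+u_{k+1}$ obtained from $R=1+qR$, and the positivity of the coefficients of $W$, $RW$, $R^2W$ (your $b_n>0$ for $n\ge 2$ via partitions into parts $\ge 2$) is the paper's $u_k,t_k,s_k>0$. Part (2) likewise matches the paper's argument that $(q)_\infty^3$ is nonvanishing on the unit disc and tends to $0$ as $q\to 1^-$, only written out in more detail.
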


\begin{proof}[Proof of Proposition~\ref{prop1}]
Set $R(q):=\sum _{j=0}^{\infty}q^j$, $1/(q)_{\infty}^3:=R^3W$, i.~e. 
$W=(\prod _{k=2}^{\infty}R(q^k))^3$. 
The numbers $r_k$ are the coefficients of the series $1/(q)_{\infty}^3$. Denote 
by $s_k$, $t_k$ and $u_k$ the coefficients of the series 
$R^2W$, $RW$ and $W$. The equality $R=1+qR$ implies 

$$r_{k+1}=r_k+s_{k+1}~~,~~s_{k+1}=s_k+t_{k+1}~~{\rm and}~~t_{k+1}=t_k+u_{k+1}~.$$
As $r_k>0$, $s_k>0$, $t_k>0$ and $u_k>0$ for $k\geq 2$, 
this proves statement (1).
The radius of convergence of the right-hand side of (\ref{qinfty3}) is $1$ (and 
$(q)_{\infty}^3>0$ for $q\in [0,1)$), 
therefore this is the case of the series $(H)$ as well. This 
is statement~(2).

\end{proof}

\begin{prop}\label{M}
The function $M$ is convex on $(0,1)$.
\end{prop}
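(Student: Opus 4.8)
To prove that $M$ is convex on $(0,1)$, the plan is to pass to the logarithmic derivative of $M$, where convexity turns into the positivity of a power series with arithmetic coefficients, and then to close the argument with two completely elementary bounds for the divisor sum $\sigma(n):=\sum_{d\mid n}d$.

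First, recall that $(q)_{\infty}=\prod_{k\ge1}(1-q^k)$ has no zeros in the open unit disc and that $M=(q)_{\infty}^3>0$ on $[0,1)$ (Remarks~\ref{remlist}). Hence $P:=-M'/M$ is analytic on $\{|q|<1\}$, positive on $(0,1)$, and satisfies $M'=-PM$, so that
\[
M''=M\,(P^2-P').
\]
Since $M>0$ on $(0,1)$, it is enough to prove $P^2-P'\ge0$ there. Differentiating $\log M=3\sum_{k\ge1}\log(1-q^k)$ and expanding each geometric series $1/(1-q^k)$ gives the Lambert series
\[
P(q)=3\sum_{k\ge1}\frac{kq^{k-1}}{1-q^k}=3\sum_{n\ge1}\sigma(n)\,q^{n-1},
\]
a power series with non-negative coefficients and radius of convergence $1$; consequently $P^2$ and $P'$ have radius $1$ as well, so it suffices to show that every Taylor coefficient of $P^2-P'$ is non-negative. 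A short computation identifies the coefficient of $q^{n-2}$ (for $n\ge2$) as
\[
c_n=9\sum_{i=1}^{n-1}\sigma(i)\sigma(n-i)-3(n-1)\sigma(n).
\]

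To finish I would invoke only $\sigma(m)\ge m$ (since $m\mid m$), the elementary identity $\sum_{i=1}^{n-1}i(n-i)=\frac{1}{6}(n-1)n(n+1)$, and $\sigma(n)\le1+2+\cdots+n=\frac{1}{2}n(n+1)$ (all divisors of $n$ lie in $\{1,\dots,n\}$). Together these yield
\[
c_n\ \ge\ 9\cdot\frac{1}{6}(n-1)n(n+1)-3(n-1)\sigma(n)\ =\ 3(n-1)\left(\frac{1}{2}n(n+1)-\sigma(n)\right)\ \ge\ 0,
\]
with strict inequality when $n\ge3$, because then $n-1$ is not a divisor of $n$ and hence $\sigma(n)<\frac{1}{2}n(n+1)$. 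As $c_2=0$, this shows $P^2-P'=\sum_{n\ge3}c_nq^{n-2}$ is strictly positive on $(0,1)$, so $M''=M\,(P^2-P')>0$ on $(0,1)$ and $M$ is (strictly) convex there.

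The step I expect to be the main obstacle is not any single estimate but the analytic bookkeeping of the reduction: verifying that $P$, $P^2$ and $P'$ are genuinely represented on $(0,1)$ by the convergent series above — this rests on $(q)_{\infty}$ being zero-free in the disc and on $M>0$ on $[0,1)$ — and that $M''=M(P^2-P')$ is legitimate there. Once that is in place the divisor bounds are immediate; in particular one does not need the deeper convolution identity $\sum_{i=1}^{n-1}\sigma(i)\sigma(n-i)=\frac{1}{12}\bigl(5\sigma_3(n)+(1-6n)\sigma(n)\bigr)$ (equivalent to Ramanujan's relation $q\,dE_2/dq=(E_2^2-E_4)/12$), which would instead give the exact value $c_n=\frac{15}{4}\bigl(\sigma_3(n)-(2n-1)\sigma(n)\bigr)$, where $\sigma_3(n):=\sum_{d\mid n}d^3$.
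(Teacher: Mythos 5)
Your proof is correct, and it takes a genuinely different route from the paper's, even though both reduce convexity to the same inequality. Writing $M=e^{3L}$, the paper shows $3(L')^2+L''>0$ on $(0,1)$ --- which is exactly your $P^2-P'>0$ up to the factor $3$, since $P=-3L'$ --- but it proves this \emph{pointwise in $q$}: it keeps $L'$ and $L''$ as Lambert-type sums of rational functions $q^{j-1}/(1-q^j)^2$, groups the terms of $3(L')^2$ and of $-L''$ into blocks $q^sS_s$ and $q^sT_s$, and verifies $S_s>T_s$ for each $s$ by comparing denominators, e.g.\ $(1-q^{\nu})^2(1-q^{s+2-\nu})^2\le(1-q^{s+1})^4<(1-q^{s+1})^3$. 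You instead expand everything into a genuine Taylor series, turning the inequality into coefficient-wise non-negativity of $P^2-P'$, where the coefficient $c_n=9\sum_{i=1}^{n-1}\sigma(i)\sigma(n-i)-3(n-1)\sigma(n)$ is handled by the elementary bounds $\sigma(m)\ge m$, $\sum_{i=1}^{n-1}i(n-i)=\tfrac16(n-1)n(n+1)$ and $\sigma(n)\le\tfrac12 n(n+1)$; your strictness argument for $n\ge3$ (that $n-1\nmid n$, so $\sigma(n)<\tfrac12 n(n+1)$) is also sound, and $c_2=0$ checks out. Your version establishes the a priori stronger fact that $P^2-P'$ has non-negative Taylor coefficients, and it makes the arithmetic content (divisor sums) explicit; the paper's version avoids any series rearrangement or convolution bookkeeping by working directly with the closed-form rational terms, which is the ``analytic bookkeeping'' you correctly flag as the only delicate point of your reduction (and which is unproblematic here, since $(q)_\infty$ is zero-free in the unit disc and all the series involved have radius of convergence $1$ with positive terms).
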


The proposition is proved in Section~\ref{secprM}.

\begin{rem}\label{remlist1}
{\rm The first 10 coefficients of the FPS 
$\Delta _1$, $\Delta _2$, $\Delta _3$ are listed below:}
$$\begin{array}{rrrrrrrrrr}
1&-1&-1&-1&-2&-4&-10&-25&-66&-178\\ 1&0&0&1&3&9&24&66&180&498\\ 
1&0&0&0&0&0&-1&-3&-9&-22\\ 
\end{array}$$
{\rm The reason why part (2) of the theorem does not hold true for $j=1$ 
is explained in Remark~\ref{remexplain}. It would be interesting to (dis)prove 
that for $k\geq 1$ the sequence $\{ r_{k+1}/r_k\}$ is decreasing.}
\end{rem}

{\bf Acknowledgement.} The author is grateful to B. Shapiro, A. Sokal and 
A. Eremenko for the discussions of this text.

\section{Proof of Theorem~\protect\ref{tm1}}

Prove part (1) of the theorem. 
Consider the condition $\theta (q,x)=0$. If instead of $-\xi _j$ 
one substitutes just $-q^{-j}$ for $x$ in $\theta$, then the negative powers of 
$q$ cancel in $\theta (q,-q^{-j})$. 
Indeed, denote by $\lambda _s$ the degree of the $s$th monomial 
of the Laurent series $\theta (q,-q^{-j})$ (i.e. the degree of the monomial 
$q^{s(s+1)/2}x^s|_{x=-q^{-j}}$). Hence $\lambda _s=(s^2+s)/2-js$.  

\begin{rem}\label{remlambda}
{\rm The first $j$ degrees $\lambda _s$ decrease from $\lambda _0=0$ to 
$\lambda _{j-1}=-j(j-1)/2$. Starting from $\lambda _{j}$, the degrees 
increase. One has} 

$$\begin{array}{ll}
(A)~:~&\lambda _{\nu}=\lambda _{2j-1-\nu}~~{\rm for}~~\nu =0,\ldots ,j-1~.\\ \\ 

(B)~:~&\lambda _{s+1}-\lambda _s=s+1-j~~ 
{\rm (for}~~s\geq j-1~~{\rm this~gives}~~
0, 1, 3, 6, 10,\ldots {\rm )~.}\end{array}$$
\end{rem}
 
The expansion of $\theta (q,-q^{-j})$ contains the monomials 
$(-1)^{\nu}q^{\lambda _{\nu}}$ and 
$(-1)^{2j-1-\nu}q^{\lambda _{\nu}}$ which cancel. 
When one substitutes $-q^{-j}+a_jq^{\kappa _j}+o(q^{\kappa _j})$ for $x$ in 
$\theta (q,x)$, one gets 

$$\Psi _s:=q^{s(s+1)/2}x^s|_{x=-q^{-j}+a_jq^{\kappa _j}+o(q^{\kappa _j})}=
(-1)^sq^{\lambda _s}+(-1)^{s-1}sa_jq^{\mu _s}+o(q^{\mu _s})~~,$$
where $\mu _s=s(s+1)/2+(s-1)(-j)+\kappa _j$. 
The lowest value of $\mu _s$ is attained for and only for $s=j-1$ and $s=j$. 
(One has $\mu _{j-1}=\mu _j=-j(j-3)/2+\kappa _j$.) In the Taylor 
series expansion 
of $\Psi _{j-1}$ and $\Psi _j$  
the sum of the coefficients of the two monomials with this power of $q$ equals 

$$S:=(-1)^j(j-1)a_j+(-1)^{j-1}ja_j=(-1)^{j-1}a_j~.$$
The quantity $S_1:=Sq^{\mu _j}$ 
must cancel with other monomials or with just another monomial 
from the 
expansion of $\theta (q,-\xi _j)$. This must be just one monomial, and it 
has to be  
$q^{\lambda _{2j}}$. Indeed, any monomial $T$ in the expansion 
of $\Psi _s$ for $s\neq j-1$ and $s\neq j$ which is not 
$q^{\lambda _s}$, has a degree higher than $\mu _j$. 
The same holds true for the monomials of $\Psi _{j-1}$ and $\Psi _j$ which are 
different from $\pm q^{\lambda _j}$ and $(-1)^{s-1}sa_jq^{\mu _j}$, $s=j-1$ or $j$.  
On the other hand, the monomials $q^{\lambda _s}$ cancel for 
$s=0$, $\ldots$, $2j-1$. Hence $S_1$ can cancel with (a) monomial(s) of degree 
at least $\lambda _{2j}$. The only monomial of degree $\lambda _{2j}$ is 
$q^{\lambda _{2j}}$ (where $\lambda _{2j}=(4j^2+2j)/2-2j^2=j$), 
and $S_1$ must cancel with it. Indeed, otherwise no monomial 
$T$ cancels with it either, i.e. the quantity $\theta (q,\xi _j)$ 
is not identically equal to $0$ which is a contradiction.
Hence $(-1)^{j-1}a_j+1=0$, i.e. $a_j=(-1)^j$, and 

$$-j(j-3)/2+\kappa _j=j~~{\rm hence}~~\kappa _j=j(j-1)/2~.$$
Thus the zero $\xi _j=-q^{-j}/\Delta _j$ is of the form 

$$-q^{-j}+(-1)^jq^{j(j-1)/2}+o(q^{j(j-1)/2})=
(-q^{-j})(1-(-1)^jq^{j(j+1)/2}+o(q^{j(j+1)/2}))~$$
which means that $\Delta _j=1+(-1)^jq^{j(j+1)/2}+o(q^{j(j+1)/2})$.  
This proves part (1).

We prove part (2) for $j\geq 4$. For $j=2$ and $j=3$ its proof is contained 
in Remark~\ref{remexplain} below. 
Recall that $-\xi _j$ is represented in the form 
$-q^{-j}+(-1)^j(q^{\kappa _j}+g_{j,1}q^{\kappa _j+1}+g_{j,2}q^{\kappa _j+2}+
o(q^{\kappa _j+2}))$. When one computes the expansion of $\Psi _s$ 
in powers of $q$, one applies the formula of the Newton binomial to the 
Laurent series of $-\xi _j$: 

(i) The term containing the 
first power of $q$ obtained in the 
expansion of $\Psi _s$ is $(-1)^sq^{\lambda _s}$.

(ii) The next ones are of the form $(-1)^{s-1}sq^{\lambda _s+j+\kappa _j}$ and 
$(-1)^{s-1}sg_{j,\nu}q^{\lambda _s+j+\kappa _j+\nu}$, $\nu =1$, $\ldots$, 
$j+\kappa _j-1$. 

(iii) To compute the higher powers of $q$ one has to take into account the 
monomial $(-1)^{s-2}(s(s-1)/2)q^{\lambda _s+2j+2\kappa _j}$ and then other monomials 
in which participate two or more of the 
factors $q^{\kappa _j}$, $g_{j,1}q^{\kappa _j+1}$, $g_{j,2}q^{\kappa _j+2}$ etc.      

(iv) Consider the sum $\Psi _{j-1-l}+\Psi _{j+l}$, $l=0$, $1$, $\ldots$, $j-1$. 
Its terms mentioned in (i) cancel. Its terms mentioned in (ii) equal
$(-1)^{j+l-1}(2l+1)q^{\lambda _{j+l}+j+\kappa _j}$ and 
$(-1)^{j+l-1}(2l+1)g_{j,\nu}q^{\lambda _{j+l}+j+\kappa _j+\nu}$, $\nu =1$, $\ldots$, 
$j+\kappa _j-1$. 

(v) The lowest power of $q$ in the expansion of $\Psi _{2j+r}$, 
$r=0$, $1$, $\ldots$,  
is $q^{\lambda _{2j+r}}$, where $\lambda _{2j+r}=jr+j+r(r+1)/2\geq j$. 

The following matrix illustrates the case $j=4$. We present 
the quantity $-\xi _4$ in the form 

$$-\xi _4=-q^{-4}+aq^{6}+bq^{7}+cq^{8}+dq^{9}+hq^{10}+uq^{11}+\cdots $$ 
with $b=g_{j,1}$, $c=g_{j,2}$, $\ldots$. 
(In fact, we know that $a=1$, but for the moment we prefer to keep $a$ 
as unknown quantity.) The first column 
indicates the power of $q$. The next columns show the coefficients 
of the corresponding powers of $q$ in the expansions of $\Psi _s$ 
for $s=0$, $\ldots$, $9$. This means, in particular, that 

$$\Psi _3=-q^{-6}+3aq^4+3bq^5+3cq^6+3dq^7+3hq^8+3uq^9+\cdots ~.$$
The entries $\pm 1$ of the matrix are the coefficients of $(-1)^sq^{\lambda _s}$.

$$\begin{array}{r|cccc|cccccc}
9&&c&-2h&3u&-4u&5h&-6c&&&-1 \\ 
8&&b&-2d&3h&-4h&5d&-6b&&& \\ 
7&&a&-2c&3d&-4d&5c&-6a&&& \\ 
6&&&-2b&3c&-4c&5b&&&& \\ 
5&&&-2a&3b&-4b&5a&&&& \\ 
4&&&&3a&-4a&&&&1& \\ \hline  
1~{\rm to}~3&&&&&&&&&& \\ \hline  
0&1&&&&&&&-1&& \\ 
-1&&&&&&&&&& \\ 
-2&&&&&&&&&& \\ 
-3&&-1&&&&&1&&& \\ 
-4&&&&&&&&&& \\ 
-5&&&1&&&-1&&&& \\ 
-6&&&&-1&1&&&&& \\  
\hline  
&\Psi _0&\Psi _1&\Psi _2&\Psi _3&\Psi _4&\Psi _5&\Psi _6&\Psi _7&
\Psi _8&\Psi _9 
\end{array}$$
As $\Psi _8=q^4+o(q^9)$ and $\Psi _9=-q^9+o(q^9)$, one obtains the following 
system of equations from which one finds the quantities $a$, $\ldots$, $h$ 
(one writes the conditions that the sums of the coefficients of 
$q^s$ are $0$, $s=9$, $8$, $\ldots$, $4$; the coefficient of $q^5$, 
for instance, equals 
$-2a+3b-4b+5a=-b+3a$):

\begin{equation}\label{systemmm}
\begin{array}{rrrrrrrrrrrrrrrr}
-u&+3h&&-5c&&&-1&=&0&~~~~~~~~&-c&+3b&&&=&0\\ \\ 
&-h&+3d&&-5b&&&=&0&&&-b&+3a&&=&0\\ \\
&&-d&+3c&&-5a&&=&0&&&&-a&+1&=&0
\end{array}
\end{equation}
The system is triangular and one readily finds that 

$$(~a~,~b~,~c~,~d~,~h~,~u~)~=~(~1~,~3~,~9~,~22~,~51~,~107~)~.$$
Now we describe what the analogs of the above matrix and the above system 
of equations are when $j$ is arbitrary. The analogs of the 
lines of the powers $4$ and 
$9$ of the matrix are the lines of the powers $j$ and $2j+1$. Their 
rightmost indicated entries equal $(-1)^{2j}=1$ and $(-1)^{2j+1}=-1$. 
The columns of $\Psi _{j-3}$, $\ldots$, $\Psi _{j+2}$ are given in the next 
matrix ($j$ is presumed greater than $4$):

$$\begin{array}{l|cccc|ccccc}
j+5&&(j-3)c&-(j-2)h&(j-1)u&-ju&(j+1)h&-(j+2)c&& \\ 
j+4&&(j-3)b&-(j-2)d&(j-1)h&-jh&(j+1)d&-(j+2)b&& \\ 
j+3&&(j-3)a&-(j-2)c&(j-1)d&-jd&(j+1)c&-(j+2)a&& \\ 
j+2&&&-(j-2)b&(j-1)c&-jc&(j+1)b&&& \\ 
j+1&&&-(j-2)a&(j-1)b&-jb&(j+1)a&&& \\ 
j&&&&(j-1)a&-ja&&&&1 \\ \hline  
1~{\rm to}~j-1&&&&&&&&& \\ \hline  
0&1&&&&&&&-1& \\ \hline 
\lambda _{j+2}&&(-1)^{j-3}&&&&&(-1)^{j+2}&& \\ 
\lambda _{j+2}-1&&&&&&&&& \\ 
\lambda _{j+1}&&&(-1)^{j-2}&&&(-1)^{j+1}&&& \\ 
\lambda _j&&&&(-1)^{j-1}&(-1)^j&&&& \\  
\hline  
&\Psi _0&\Psi _{j-3}&\Psi _{j-2}&\Psi _{j-1}&\Psi _j&\Psi _{j+1}&
\Psi _{j+2}&\Psi _{2j-1}&
\Psi _{2j} 
\end{array}$$
If one writes then the analog $(E)$ of system (\ref{systemmm}), 
only its first equation will be slightly different:
$-u+3h-5c=0$. Hence 

$$(~a~,~b~,~c~,~d~,~h~,~u~)~=~(~1~,~3~,~9~,~22~,~51~,~108~)~.$$
Denote by $(E_k)$ the equation of system $(E)$  
expressing the fact that the sum of the coefficients of $q^k$ must be $0$. 
(In system (\ref{systemmm}) we have written explicitly 
equations $(E_9)$ -- $(E_4)$.) 
Fix $j_0\in \mathbb{N}$. It is easy to see that for $j\geq j_0$ 
the equations $(E_j)$ -- $(E_{j+j_0})$ do not depend on $j$. Their 
form is (\ref{recur}) 
(this follows from statement (B) of Remark~\ref{remlambda}). 
Therefore the values of the first $j_0+1$ quantities $a$, $b$, $\ldots$,  
do not depend on $j$ for $j\geq j_0$. Part (2) is proved.

\begin{rem}\label{remexplain}
{\rm Part (2) of the theorem does not hold true for $j=1$ 
for the following reason. Consider the matrices of coefficients of the 
Laurent series $\Psi _j$. Recall that 
$\Psi _{s}=(-1)^s+(-1)^{s-1}sa_jq^{\mu _s}+\cdots$ with $\mu _s\geq j(j+1)/2$ 
for $s\geq 2j-1$. For 
$j>3$ the inequality $j(j+1)/2>2j$ holds true which means that the 
columns of $\Psi _{2j-1}$ and $\Psi _{2j}$  
(considered only for degrees of $q$ 
ranging from $-j(j-1)/2$ to $2j$) contain as only nonzero entries 
$\pm 1$ in the rows corresponding respectively to degree 
$0$ and $j$. (The columns of $\Psi _{\nu}$ for $\nu >2j$ contain only zeros 
in these rows.) Thus the system to which 
$(a,b,\ldots )$ is solution is completely defined (and 
according to one and the same rule) by the columns of 
the matrix containing the coefficients of $\Psi _0$, $\ldots$, 
$\Psi _{2j}$. 

For $j=2$ and $j=3$ one has to check directly that the system is 
of the same form, i. e.}

$$\begin{array}{lllllll}
-a+1=0&&-b+3a=0&&-c+3b=0&&{\rm and}\\ 
-a+1=0&&-b+3a=0&&-c+3b=0&&-d+3c-5a=0~.
\end{array}$$
{\rm For $j=1$ the system becomes $-a+1=0$, $-b+2a=0$ and the value of 
$b$ is not $3$, but $2$.} 
\end{rem}

\section{Proof of Proposition~\protect\ref{M}
\protect\label{secprM}}

Set $M=e^{3L}$, where $3L:=\ln M=3\sum _{k=1}^{\infty}\ln (1-q^k)$. Hence 
$M''=(9(L')^2+3L'')e^{3L}$ and it suffices to show that $3(L')^2+L''>0$ on 
$(0,1)$. Using Taylor series at $0$ of the logarithm one gets
$\displaystyle{L=-\sum _{j=1}^{\infty}(1/j)\sum _{k=1}^{\infty}q^{jk}=
-\sum _{j=1}^{\infty}q^j/j(1-q^j)=
-\sum _{j=1}^{\infty}(-1/j+1/j(1-q^j))}$, so 

$$\begin{array}{lccl}L'=-\sum _{j=1}^{\infty}q^{j-1}/(1-q^j)^2
&{\rm and}&L''=U+2V&{\rm ,~~where}\\ \\ 
U:=-\sum _{j=2}^{\infty}(j-1)q^{j-2}/(1-q^j)^2&,&V:=
-\sum _{j=1}^{\infty}jq^{2j-2}/(1-q^j)^3~.&\end{array}$$
In the expansion of $3(L')^2$ all terms are of the form 
$q^s/(1-q^k)^2(1-q^l)^2$. For $s$ fixed denote by $q^sS_s$ the sum of all 
these terms, i.e. $3(L')^2=\sum _{s=0}^{\infty}q^sS_s$. One has 

$$S_s=\left\{ \begin{array}{ll}
6\sum _{\nu =1}^{s/2}1/(1-q^{\nu})^2(1-q^{s+2-\nu})^2+3/(1-q^{s/2+1})^4&
{\rm for~}s~{\rm even}\\ \\
6\sum _{\nu =1}^{(s+1)/2}1/(1-q^{\nu})^2(1-q^{s+2-\nu})^2&
{\rm for~}s~{\rm odd.}\end{array}\right.$$
On the other hand, $L''=-\sum _{s=0}^{\infty}q^sT_s$, where 
$T_s:=(s+1)/(1-q^{s+2})^2+2(s+1)q^s/(1-q^{s+1})^3$. To prove that $3(L')^2+L''>0$ 
it suffices to show that for $s=0$, $1$, $\ldots$ one has $S_s>T_s$ for 
$q\in (0,1)$. 

Observe that 
$T_s<3(s+1)/(1-q^{s+1})^3$. One has 
$1/(1-q^{\nu})^2(1-q^{s+2-\nu})^2>1/(1-q^{s+1})^3$, 
$1/(1-q^{s/2+1})^4>1/(1-q^{s+1})^3$ and 
$1/(1-q^{\nu})^2(1-q^{s+2-\nu})^2>1/(1-q^{s+1})^3$ 
for the values of the indices $s$ and $\nu$ as indicated above. Hence 
$S_s>3(s+1)/(1-q^{s+1})^3>T_s$ from which the proposition follows.

\end{document}